\documentclass[12pt, reqno, twoside, letterpaper]{amsart}

\usepackage{paperstyle}
\usepackage{mathtools}
\usepackage{bm}
\usepackage{upgreek}

\def\sLT{\EuScript{L}_T}
\def\sLTtau{\EuScript{L}_{\natural}}
\def\sLx{\EuScript{L}_x}

\def\bg{\bm{\upgamma}}
\def\bgp{\bm{\upgamma'}}

\def\obg{\bm{\Sigma\upgamma}}
\def\obgp{\bm{\Sigma\upgamma'}}
\def\eobg{\bm{\Sigma\upgamma}}
\def\eobgp{\bm{\Sigma\upgamma'}}



\title[Pair correlation for sums of two ordinates]
{Pair correlation for sums of two ordinates\\ of zeros of the Riemann zeta function}

\author[W.\ D.\ Banks]{William D.\ Banks}

\address{Department of Mathematics,
         University of Missouri,
         Columbia MO, USA.}

\email{bankswd@missouri.edu}

\date{\today}

\begin{document}

\begin{abstract}
Assuming the Riemann Hypothesis, we extend Montgomery's pair correlation
method to study the distribution of differences between sums
$\gamma_1+\gamma_2$ of two ordinates of nontrivial zeros of the Riemann
zeta function. For the associated pair correlation function
we prove that
\[
G_2(\alpha,T)=\biggl\{\frac{\log T}{T^{2\alpha}}+
\frac{4\alpha^3}{3T^\alpha}\biggr\}
\biggl\{1+O\(\frac{1}{\log\log T}\)\biggr\}
\]
uniformly for $0\le\alpha\le \frac23-\frac{2\log\log T}{\log T}$.
In contrast with the conjectured GUE statistics of the ordinates 
themselves, this result points to an absence of level repulsion
among sums of two ordinates, the two-point correlation function
of the sums being identically one, as for a Poisson process.
\end{abstract}

\maketitle


{\Large\section{Introduction}\label{sec:intro}}

\subsection{Background}\label{sec:background}
Over a half-century ago, in his celebrated pair correlation paper,
Montgomery \cite{Mont} studied the distribution of the differences
$\gamma-\gamma'$ between ordinates of zeros $\rho=\tfrac12+i\gamma$
of the Riemann zeta function $\zeta(s)$,
assuming the Riemann hypothesis~(RH).
His \emph{pair correlation conjecture} (PCC) asserts that
$1-((\sin\pi u)/\pi u)^2$ is the two-point correlation function for
zeros of $\zeta(s)$ on the critical line. In other words, for any
real numbers $\alpha<\beta$, PCC predicts
\[
\lim\limits_{T\to\infty}
\frac{\bigl|\bigl\{(\gamma,\gamma'):\gamma,\gamma'\in(0,T],\frac{2\pi\alpha}{\log T}<\gamma-\gamma'\le\frac{2\pi\beta}{\log T}\bigr\}\bigr|\vphantom{\Big|}}{\frac{1}{2\pi}T\log T}
=\int_\alpha^\beta\biggl\{1-\(\frac{\sin\pi u}{\pi u}\)^2\biggr\}\dd u
+\delta(\alpha,\beta),
\]
where zeros are counted with multiplicity, and
$\delta(\alpha,\beta)\defeq 1$ if $0\in[\alpha,\beta]$,
$\delta(\alpha,\beta)\defeq 0$ otherwise; the $\delta$-term accounts
for the diagonal pairs $\gamma=\gamma'$.
For general background on the conjecture and its implications,
we refer the reader to the excellent survey article by
Goldston \cite{Goldston} and to the references therein.
Higher correlations of the zeros were later studied by
Hejhal \cite{Hejhal} and by Rudnick and Sarnak \cite{RudSar}.
We note that the correlation sums considered here involve quadruples
of ordinates only through the degenerate linear form
$\gamma_1+\gamma_2-\gamma_1'-\gamma_2'$; thus,
the associated test functions lie outside the admissible
class of \cite{RudSar}, and so our results are
not contained in the theory of $n$-level
correlations.

To access the distribution of the differences $\gamma-\gamma'$,
Montgomery studied the pair correlation function
\[
F(\alpha)=F(\alpha,T)\defeq\frac{2\pi}{T\log T}
\ssum{\gamma,\gamma'\in(0,T]}T^{i\alpha(\gamma-\gamma')}w(\gamma-\gamma'),
\]
where $\alpha$ and $T\ge 2$ are real, and $w$ is the weight
\be\label{eq:w(u)defn}
w(u)\defeq\frac{4}{u^2+4}\qquad(u\in\R).
\ee

\begin{theorem*}[Montgomery \cite{Mont}]
For any fixed $\eps>0$, one has
\[
F(\alpha)=\bigl(1+o(1)\bigr)T^{-2\alpha}\log T+\alpha+o(1)
\]
uniformly for $0\le\alpha\le 1-\eps$.
\end{theorem*}

\subsection{Statement of results}\label{sec:results}
In the present note, we extend Montgomery's approach
to study a pair correlation function for sums of two ordinates.

\begin{definition*}
Assume \text{\rm RH}. Let $\fZ_2(T)$ be
the multiset  of all ordered pairs
$\bg=(\gamma_1,\gamma_2)$ such that $\gamma_j>0$
and $\zeta(\tfrac12+i\gamma_j)=0$ for each $j$, and
$\gamma_1+\gamma_2\le T$. Each
pair $(\gamma_1,\gamma_2)$ occurs $m_1m_2$ times in
$\fZ_2(T)$, where $m_j$ is the multiplicity
of the zero $\tfrac12+i\gamma_j$.
\end{definition*}

\noindent Our main object of study
is the normalized pair correlation function given by
\be\label{eq:G2alphaT-defn}
G_2(\alpha)=G_2(\alpha,T)\defeq
3\(\frac{2\pi}{T\log T}\)^{3}
\ssum{\bg,\bgp\in\fZ_2(T)}
T^{i\alpha(\eobg-\eobgp)}
w(\obg-\obgp),
\ee
where here and in the sequel we use the notation
\[
\obg\defeq\gamma_1+\gamma_2\qquad
\text{for each}\quad\bg=(\gamma_1,\gamma_2).
\]
The factor $3$ in \eqref{eq:G2alphaT-defn} reflects the linearly
increasing density of the multiset
\[
\{\obg:\bg\in\fZ_2(T)\}.
\]
Since $|\fZ_2(u)|\sim\tfrac12\(\frac{u\log u}{2\pi}\)^2$, we have
\[
\frac13\(\frac{T\log T}{2\pi}\)^3
\sim\frac{2\pi}{\log T}\int_0^T\(\frac{u\log^2 u}{4\pi^2}\)^2\dd u,
\]
the natural pair-count normalization for a point process of
nonconstant intensity.
As with Montgomery's $F$, the function $G_2$ is real and even, and
$G_2(\alpha)\ge 0$ for all $\alpha$. Indeed, since
$w(v)=\int_\R e^{-2|u|}e^{ivu}\dd u$, the sum in
\eqref{eq:G2alphaT-defn} is equal to
\[
\int_\R e^{-2|u|}\,\biggl|\ssum{\bg\in\fZ_2(T)}
T^{i\alpha\eobg}e^{i\eobg u}\biggr|^2\dd u.
\]
Our main result is the following theorem,
which is proved in \S\ref{sec:proof}.

\begin{theorem}\label{thm:main}
Assume \text{\rm RH}, and let $T\ge 100$. We have
\[
G_2(\alpha)=\biggl\{\frac{\log T}{T^{2\alpha}}
+\frac{4\alpha^3}{3T^\alpha}\biggr\}
\biggl\{1+O\(\frac{1}{\log\log T}\)\biggr\}
\]
uniformly for $0\le\alpha\le \frac23-\frac{2\log\log T}{\log T}$.\end{theorem}

Sums over the differences $\obg-\obgp$ can be evaluated via
Theorem~\ref{thm:main}. If $r\in L^1(\R)$ and
$\hat r(\alpha)\defeq\int_\R r(u)e^{2\pi i\alpha u}\dd u$
also lies in $L^1(\R)$, then by Fourier inversion and the evenness
of $G_2$ we have, as in \cite[eq.\,(3)]{Mont}:
\[
\ssum{\bg,\bgp\in\fZ_2(T)}r\!\((\obg-\obgp)\frac{\log T}{2\pi}\)
w(\obg-\obgp)=\frac13\(\frac{T\log T}{2\pi}\)^{3}\int_\R
G_2(\alpha)\hat r(\alpha)\dd \alpha.
\]
For example (cf.\ \cite[Cor.\,1]{Mont}), if $0<\lambda<\tfrac23$, then
\dalign{
\ssum{\bg,\bgp\in\fZ_2(T)}
\(\frac{\sin\lambda(\obg-\obgp)\log T}
{\lambda(\obg-\obgp)\log T}\)
w(\obg-\obgp)
&\sim\frac{1}{6\lambda}\(\frac{T\log T}{2\pi}\)^3,\\
\ssum{\bg,\bgp\in\fZ_2(T)}
\(\frac{\sin\tfrac12\lambda(\obg-\obgp)\log T}
{\tfrac12\lambda(\obg-\obgp)\log T}\)^2
w(\obg-\obgp)
&\sim\frac{1}{3\lambda}\(\frac{T\log T}{2\pi}\)^3.
}

We now describe the statistical meaning of
Theorem~\ref{thm:main}. Montgomery's theorem, coupled with the PCC,
predicts that the rescaled differences
$(\gamma-\gamma')\frac{\log T}{2\pi}$
are distributed according to GUE statistics, which exhibit the
characteristic quadratic level repulsion of the sine kernel near the
origin. Theorem~\ref{thm:main} tells a very different story for sums
of two ordinates. Indeed, the term $T^{-2\alpha}\log T$ carries unit
mass concentrated near $\alpha=0$ and corresponds, on the Fourier
side, to the constant density one, whereas the term
$\tfrac43\alpha^3T^{-\alpha}$ contributes only $O(\log^{-4}T)$.
Moreover, in contrast with the situation for $F(\alpha)$, no constant
term is present, since the diagonal contribution to $G_2(\alpha)$ is
$O((T\log T)^{-1})$. Thus, if one assumes (in analogy with
Montgomery's conjecture that $F(\alpha)=1+o(1)$ for $\alpha\ge1$, and
as Theorem~\ref{thm:main} strongly suggests) that
$G_2(\alpha)=o(1)$ holds uniformly for $1\le\alpha\le A$ with any
fixed $A>1$, then for fixed $\alpha<\beta$ one is led to the
prediction that
\[
\lim\limits_{T\to\infty}
\frac{\bigl|\bigl\{(\bg,\bgp):\bg,\bgp\in\fZ_2(T),
 \frac{2\pi\alpha}{\log T}<\obg-\obgp\le\frac{2\pi\beta}{\log T}\bigr\}\bigr|
 \vphantom{\Big|}}{\tfrac13\(\frac{1}{2\pi}T\log T\)^{3}}
=\int_\alpha^\beta 1\dd u=\beta-\alpha.
\]

We prove Theorem~\ref{thm:main} in the range
$0\le\alpha\le\frac23-\frac{2\log\log T}{\log T}$. Throughout this range
the first term $T^{-2\alpha}\sLT$ dominates, so the asymptotic already
determines the density-one behavior underlying the above prediction. The
extension to $\alpha\to 1^-$, where the crossover to the second term
occurs, relies on a cancellation estimate for one-sided sums over
pairs of ordinates that we do not address here.

The prediction suggests that 
the differences of sums of two ordinates exhibit no
repulsion at all, the two-point correlation function being
identically \emph{one}. Thus, the ordinates themselves
are expected to mimic the eigenvalues of a random Hermitian matrix,
whereas their pairwise sums behave like a Poisson process, and the
arithmetic structure detected by the sine kernel is entirely washed
out by a single summation.

Sums over unnormalized differences
$\gamma_1+\gamma_2-\gamma_3-\gamma_4$ were studied by
Yudelevich~\cite{Yudelevich}, who adapted the method of Ford and
Zaharescu \cite{FordZah} to show, unconditionally, that
\[
\ssum{\gamma_1,\gamma_2,\gamma_3,\gamma_4\in(0,T]}
h(\gamma_1+\gamma_2-\gamma_3-\gamma_4)
=\frac{T^3}{24\pi^4}\int_\R
h(t)\bigl(K_4(2+it)+K_4(2-it)\bigr)\dd t
+O\(\frac{T^3}{(\log T)^{1/4}}\)
\]
holds for any fixed $h\in L^1(\R)$ of mean zero satisfying mild decay
conditions, where $K_4(s)\defeq\sum_n\Lambda(n)^4n^{-s}$ with
$\Lambda$ the von Mangoldt function. As $h$ is
fixed, this determines the distribution of the linear form only at
bounded frequencies, that is, for $\alpha\ll(\log T)^{-1}$ in the
normalization of \eqref{eq:G2alphaT-defn}. Theorem~\ref{thm:main}
cannot be derived from this result, since the hypothesis that $h$ has
mean zero removes the mass near $\alpha=0$, and since for fixed
$\alpha>0$ the error term above exceeds
$\(\frac{1}{2\pi}T\log T\)^3G_2(\alpha)$ in size. The series $K_4$
nevertheless resurfaces in our work as the diagonal of the $B$-integral
in \S\ref{sec:B2}. It is also worth mentioning that
the repulsion observed in
\cite{Yudelevich} is of a different nature from the absence of
repulsion described above. There, the values
$\gamma_1+\gamma_2-\gamma_3-\gamma_4$ tend to avoid the ordinates
$\gamma$ themselves, an effect reflected in the dips of
$K_4(2+it)+K_4(2-it)$ at $t=\gamma$. That phenomenon concerns the
interaction of the linear form with the zero spectrum, whereas our
Theorem~\ref{thm:main} focuses on the mutual spacings of the sums.

\subsection{Underlying approach}\label{sec:underlying}

We assume RH throughout.
To prove Theorem~\ref{thm:main}, we follow
Montgomery's method closely, using various techniques
to carry out individual steps.
For the benefit of the reader and to set the stage for our work, we
briefly review Montgomery's argument.

Let $x>1$, $x\ne p^k$. By an old result of Landau \cite{Landau},
the relation
\[
\sum_{n\le x}\Lambda(n)n^{-s}=-\frac{\zeta'}{\zeta}(s)
+\frac{x^{1-s}}{1-s}-\sum_\rho\frac{x^{\rho-s}}{\rho-s}
+\sum_{n\in\N}\frac{x^{-2n-s}}{2n+s}
\]
holds provided that $s\ne 1$, $s\ne\rho$, $s\ne -2n$.
Multiplying by $x^{s-1/2}$, we see that
\begin{alignat}{2}
\label{eq:ice1}
\sum_\rho\frac{x^{\rho-1/2}}{\rho-s}
&=-x^{-1/2}\sum_{n\le x}\Lambda(n)\(\frac{x}{n}\)^s
-\frac{\zeta'}{\zeta}(s)x^{s-1/2}+\frac{x^{1/2}}{1-s}
+\sum_{n\in\N}\frac{x^{-2n-1/2}}{2n+s},\\
\label{eq:ice2}
\sum_\rho\frac{x^{\rho-1/2}}{\rho-s}
&=x^{-1/2}\sum_{n>x}\Lambda(n)\(\frac{x}{n}\)^s
+\frac{x^{1/2}}{1-s}+\sum_{n\in\N}\frac{x^{-2n-1/2}}{2n+s}
\qquad(\sigma>1).
\end{alignat}
As the individual terms in these relations are continuous for all
$x\ge 1$, we no longer exclude $x=1$ or $x=p^k$ from our considerations.

Next, we specialize.
Taking $s=-\tfrac12+it$ in \eqref{eq:ice1}, and noting that
every nontrivial zero of $\zeta(s)$ has the form
$\rho=\tfrac12+i\gamma$ under RH, we get that
\dalign{
&\sum_\gamma\frac{x^{i\gamma}}{1-i(t-\gamma)}
=-\frac{\zeta'}{\zeta}(-\tfrac12+it)x^{-1+it}\\
&\qquad\qquad
-x^{-1/2}\sum_{n\le x}\Lambda(n)\(\frac{x}{n}\)^{-1/2+it}
+\frac{x^{1/2}}{\tfrac32-it}
+\sum_{n\in\N}\frac{x^{-2n-1/2}}{2n-\tfrac12+it}.
}
Similarly, taking $s=\tfrac32+it$ in \eqref{eq:ice2} and
multiplying by $-1$, we have
\[
\sum_\gamma\frac{x^{i\gamma}}{1+i(t-\gamma)}
=-x^{-1/2}\sum_{n>x}\Lambda(n)\(\frac{x}{n}\)^{3/2+it}
+\frac{x^{1/2}}{\tfrac12+it}
-\sum_{n\in\N}\frac{x^{-2n-1/2}}{2n+\tfrac32+it}.
\]
Summing these relations, it follows that
\be\label{eq:mont-orig}
\sum_\gamma x^{i\gamma}W_\gamma(t)=A(t)+B(t)+C(t)+D(t),
\ee
where
\dalign{
W_\gamma(t)&\defeq\frac{2}{(t-\gamma)^2+1},\\
A(t)&\defeq-\frac{\zeta'}{\zeta}(-\tfrac12+it)\,x^{-1+it},\\
B(t)&\defeq -x^{-1/2}\biggl\{
\sum_{n\le x}\Lambda(n)\(\frac{x}{n}\)^{-1/2+it}
+\sum_{n>x}\Lambda(n)\(\frac{x}{n}\)^{3/2+it}\biggr\},\\
C(t)&\defeq\frac{x^{1/2}}{\tfrac32-it}+\frac{x^{1/2}}{\tfrac12+it},\\
D(t)&\defeq\sum_{n\in\N}\frac{x^{-2n-1/2}}{2n-\tfrac12+it}
-\sum_{n\in\N}\frac{x^{-2n-1/2}}{2n+\tfrac32+it}.
}
Note that \eqref{eq:mont-orig} is the special case $\sigma=\tfrac32$
of \cite[eq.\,(22)]{Mont}. Using \eqref{eq:mont-orig}, we have
\be\label{eq:mont-norm-eq}
\int_0^T\biggl|\sum_\gamma x^{i\gamma}W_\gamma(t)\biggr|^2\dd t
=\int_0^T\bigl|A(t)+B(t)+C(t)+D(t)\bigr|^2\dd t.
\ee
Montgomery \cite{Mont} shows that the integral on
the left side of \eqref{eq:mont-norm-eq} is equal to
\[
2\pi\hskip-8pt\ssum{\gamma,\gamma'\in(0,T]}x^{i(\gamma-\gamma')}
w(\gamma-\gamma')+O(\log^3T)
=F(\alpha)\,T\log T+O(\log^3T),
\]
where $\alpha\defeq(\log x)/\log T$. On the other hand,
the integral on the right side \eqref{eq:mont-norm-eq}
can be estimated effectively using individual estimates for
\[
\int_0^T\bigl|A(t)\bigr|^2\dd t,\qquad
\int_0^T\bigl|B(t)\bigr|^2\dd t,\qquad
\int_0^T\bigl|C(t)\bigr|^2\dd t,\qquad
\int_0^T\bigl|D(t)\bigr|^2\dd t.
\]
Montgomery's theorem (see \S\ref{sec:background})
follows by comparing these estimates. In \S\ref{sec:proof},
we adapt this argument to handle sums of two ordinates.

{\Large\section{Preliminaries}\label{sec:prelims}}

Following Riemann, we use $s$ for a complex variable
with $\sigma\defeq\Re(s)$ and $t\defeq\Im(s)$. We write
$\tau=\tau(t)\defeq|t|+10$ for any given $t\in\R$. The parameters
$x\ge 1$ and $T\ge 2$ are real numbers, and we use the notation
\[
\sLx\defeq\log x\mand\sLT\defeq\log T,
\]
extensively in the sequel.
Any implied constants in the symbols $\ll$, $O$, $\asymp$, etc.,
are absolute unless dependence on
other parameters is specified explicitly.
As mentioned earlier, we assume RH throughout the note.

Our first lemma follows from the Cauchy-Schwarz
inequality; the proof is omitted.

\bigskip

\begin{lemma}\label{lem:C-S}
Let $T$ be large, let $F_1,\dots,F_N\colon\R\to\C$, and put
\[
I_j\defeq\int_0^T\bigl|F_j(t)\bigr|^2\dd t.
\]
If $I_1,I_2\ge I_3\ge\cdots\ge I_N$, then
\[
\int_0^T\biggl|\sum_{j=1}^NF_j(t)\biggr|^2\dd t
=I_1+I_2+2\Re\!\int_0^T\!F_1(t)\overline{F_2(t)}\dd t
+O\bigl(\sqrt{(I_1+I_2)\,I_3}\,\bigr),
\]
where the implied constant depends only on $N$.
\end{lemma}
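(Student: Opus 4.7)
The plan is to expand the modulus-squared of the sum, then control the off-diagonal terms by a direct application of the Cauchy--Schwarz inequality, exploiting the monotonicity hypothesis $I_1 \ge I_2 \ge \cdots \ge I_N$ to dominate every such term by $\sqrt{I_1 I_2}$.

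More precisely, I would begin by writing
\[
\int_0^T\biggl|\sum_{j=1}^{N}F_j(t)\biggr|^2\,dt
= \sum_{j=1}^{N} I_j \;+\; 2\,\Re \sum_{1\le j<k\le N} \int_0^T F_j(t)\,\overline{F_k(t)}\,dt,
\]
so the task reduces to showing that every term other than $I_1$ is $O(\sqrt{I_1 I_2})$, with an implied constant depending only on $N$. For the diagonal contribution, since $I_j \le I_2$ for $j \ge 2$ and $I_2 = \sqrt{I_2 \cdot I_2} \le \sqrt{I_1 I_2}$, we get $\sum_{j\ge 2} I_j \le (N-1)\sqrt{I_1 I_2}$. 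For the cross terms, a single application of Cauchy--Schwarz gives
\[
\biggl|\int_0^T F_j(t)\,\overline{F_k(t)}\,dt\biggr| \le \sqrt{I_j\,I_k}.
\]
If $j=1$ (so $k\ge 2$), monotonicity yields $\sqrt{I_1 I_k} \le \sqrt{I_1 I_2}$; if $j\ge 2$, then both $I_j, I_k \le I_2$, so $\sqrt{I_j I_k} \le I_2 \le \sqrt{I_1 I_2}$. Summing over the $\binom{N}{2}$ pairs gives the stated bound.

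The entire argument is a routine manipulation, so there is no real obstacle; the only subtlety is the small observation that $I_2 \le \sqrt{I_1 I_2}$ (equivalent to $I_2 \le I_1$), which is precisely what the monotonicity of the $I_j$ provides and which allows the $O(\sqrt{I_1 I_2})$ error term to absorb not only the off-diagonal contributions but also the lower-order diagonal terms $I_2,\ldots,I_N$. It is fitting that the author omits the proof.
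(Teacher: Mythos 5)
Your argument is correct and is exactly the intended one: the paper omits the proof, noting only that it is a simple consequence of Cauchy--Schwarz, which is precisely what you carry out by expanding the square, bounding each cross term by $\sqrt{I_jI_k}$, and using the monotonicity $I_1\ge\cdots\ge I_N$ to dominate every off-diagonal and lower-order diagonal contribution by $\sqrt{I_1I_2}$.
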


The next lemma gives estimates for sums of $\Lambda(n)^k$
that are needed in \S\ref{sec:B2}.

\begin{lemma}\label{lem:wagyu-sirloin}
Fix an integer $k\ge 1$. All implied constants below depend only on $k$
and on the exponent $a$, and $\sLx$ denotes $\log x$.

\begin{enumerate}
  \item For any real $a>-1$,
  \[
  \sum_{n\le x}\Lambda(n)^k n^a
  =\frac{x^{a+1}\sLx^{\,k-1}}{a+1}
  \bigl\{1+O\bigl(\sLx^{-1}\bigr)\bigr\},
  \]
  and in the boundary case $a=-1$,
  \[
  \sum_{n\le x}\Lambda(n)^k n^{-1}
  =\frac{\sLx^{\,k}}{k}
  \bigl\{1+O\bigl(\sLx^{-1}\bigr)\bigr\}.
  \]

  \item For any real $a>1$,
  \[
  \sum_{n>x}\Lambda(n)^k n^{-a}
  =\frac{x^{1-a}\sLx^{\,k-1}}{a-1}
  \bigl\{1+O\bigl(\sLx^{-1}\bigr)\bigr\},
  \]
  and with an extra $\log n\,\log\log n$ weight,
  \[
  \sum_{n>x}\Lambda(n)^k n^{-a}\log n\,\log\log n
  =\frac{x^{1-a}\sLx^{\,k}\log\sLx}{a-1}
  \bigl\{1+O\bigl(\sLx^{-1}\bigr)\bigr\}.
  \]
\end{enumerate}
\end{lemma}

\begin{proof}[Proof sketch]
We prove the first estimate in (1); the remaining three are proved
in a similar way, the weight $\log n\,\log\log n$ in (2) contributing the
factor $\sLx\log\sLx$ since the tails are dominated by $n\asymp x$.

Put $f(u)\defeq u^{a}(\log u)^{k-1}$.
Since $\Lambda(n)^k$ and $\Lambda(n)(\log n)^{k-1}$ differ only on
proper prime powers, the sum in (1) equals
$\sum_{n\le x}\Lambda(n)f(n)$
up to an acceptable error. By partial summation, the latter sum is
\[
\int_{2^-}^{x}f(u)\dd \psi(u)
=\int_{2}^{x}f(u)\dd u+f(x)E(x)-\int_{2}^{x}f'(u)E(u)\dd u,
\]
where $E(u)\defeq\psi(u)-u\ll u\exp\bigl(-c\sqrt{\log u}\,\bigr)$ by the
prime number theorem. Both error terms are $O(x^{a+1}\sLx^{k-2})$.
Finally, integration by parts gives
\[
\int_{2}^{x}f(u)\dd u
=\frac{x^{a+1}\sLx^{k-1}}{a+1}
 -\frac{k-1}{a+1}\int_{2}^{x}\frac{f(u)}{\log u}\dd u+O(1)
=\frac{x^{a+1}\sLx^{k-1}}{a+1}
 \bigl\{1+O\bigl(\sLx^{-1}\bigr)\bigr\}.
\]
and the result follows.
\end{proof}

{\Large\section{Sums with ordinates}\label{sec:summing ords}}

In this section, we present results concerning sums over
the ordinates of zeros of $\zeta(s)$; these are needed for
our proof of Theorem~\ref{thm:main} in \S\ref{sec:proof}.

It is well known that the number $N(u)$
of nontrivial zeros $\rho=\tfrac12+i\gamma$ of $\zeta(s)$
with $0<\gamma\le u$ satisfies (under RH)
\be\label{eq:N(u)estimate}
N(u)=\frac{u}{2\pi}\log\frac{u}{2\pi\er}
+O\(\frac{\log u}{\log\log u}\)\qquad(u\ge 10);
\ee
see, for example, \cite[Cor.\,14.4]{MontVauBook}. Using
Riemann-Stieltjes integration, we deduce from~\eqref{eq:N(u)estimate}
the following uniform bounds for any $T_*\in[0,T]$:
\begin{alignat}{3}
\label{eq:shrink1}
\ssum{\gamma\\\gamma\not\in[0,T_*]}
\frac{1}{(t_*-\gamma)^2+1}&\ll\(\frac{1}{t_*+1}+\frac{1}{T_*-t_*+1}\)\sLT
\qquad&(t_*\in[0,T_*]),\\
\label{eq:shrink2}
\ssum{\gamma}\frac{1}{(t_*-\gamma)^2+1}&\ll \log(|t_*|+10)
\qquad&(t_*\in\R),\\
\label{eq:shrink3}
\ssum{\gamma\le T_*}
\frac{1}{(t_*-\gamma)^2+1}&\ll\frac{\sLT}{t_*-T_*+1}
\qquad&(t_*>T_*),\\
\label{eq:shrink4}
\ssum{\gamma\ge 0}
\frac{1}{(t_*-\gamma)^2+1}&\ll\frac{\log(|t_*|+10)}{|t_*|+1}
\qquad&(t_*\le 0),\\
\label{eq:shrink5}
\ssum{\gamma\ge 0}
\frac{1}{(t_*+\gamma)^2+1}&\ll\frac{\log(t_*+10)}{t_*+1}
\qquad&(t_*\ge 0).
\end{alignat}

The following lemma is used to estimate the $W$-integral
in \S\ref{sec:W2}.

\bigskip
\begin{lemma}\label{lem:lime}
For any $a,b\in(0,T]$, we have
\begin{align}
\label{eq:jack}
&\ssum{\gamma,\gamma'}\int_{\max\{a,b\}}^T\frac{1}{(t-a-\gamma)^2+1}
\cdot\frac{1}{(t-b-\gamma')^2+1}\dd t\\
\nonumber
&\qquad=\ssum{\gamma\in(0,T-a]\\\gamma'\in(0,T-b]}\
\int_\R\frac{1}{(t-a-\gamma)^2+1}
\cdot\frac{1}{(t-b-\gamma')^2+1}\dd t+O(\sLT^3),
\end{align}
where the sum on the left is over all pairs of ordinates
$\gamma,\gamma'$ $($of either sign$)$.
\end{lemma}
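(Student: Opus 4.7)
The plan is to express the discrepancy between the two sides of \eqref{eq:jack} as the sum of two error contributions, each of size $O(\sLT^3)$. The first, call it (A), is the part of the LHS summation coming from pairs with $\gamma \notin (0, T-a]$ or $\gamma' \notin (0, T-b]$. The second, (B), is the error from extending $\int_{\max\{a,b\}}^T$ to $\int_{\R}$ in the surviving terms with $\gamma \in (0, T-a]$ and $\gamma' \in (0, T-b]$.

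For (A), I would apply Fubini and estimate the resulting product of two $\gamma$-sums pointwise in $t \in [\max\{a,b\}, T]$. Writing $t_* \defeq t-a$, one has $t_* \in [\max\{0, b-a\}, T-a] \subset [0, T-a]$, so \eqref{eq:shrink1} with $T_* \defeq T-a$ yields
\[
\sum_{\gamma \notin (0, T-a]} \frac{1}{(t-a-\gamma)^2+1} \ll \sLT \left(\frac{1}{t-a+1} + \frac{1}{T-t+1}\right),
\]
while \eqref{eq:shrink2} gives $\sum_{\gamma'} \frac{1}{(t-b-\gamma')^2+1} \ll \sLT$. Integrating over $t \in [\max\{a,b\}, T]$ supplies one more factor of $\sLT$ from $\int \bigl(\frac{1}{t-a+1} + \frac{1}{T-t+1}\bigr)\,dt$, for a total of $O(\sLT^3)$. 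The symmetric case $\gamma' \notin (0, T-b]$ is handled identically with the roles of $a, b$ swapped.

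For (B), I would write the tail as $\int_{-\infty}^{\max\{a,b\}} + \int_T^\infty$, exchange with the finite double sum, and bound the integrand pointwise. Assume without loss of generality that $a \le b$, and split the left tail at $t = a$. When $t \le a$, both $t-a$ and $t-b$ are nonpositive, and two applications of \eqref{eq:shrink4} yield $\ll \sLT^2/((a-t+1)(b-t+1))$, which integrates to $O(\sLT^2)$. When $a < t \le b$, using \eqref{eq:shrink2} on the first sum and \eqref{eq:shrink4} on the second gives $\ll \sLT^2/(b-t+1)$, which integrates to $O(\sLT^3)$. The right tail $t \ge T$ is handled by two applications of \eqref{eq:shrink3} with $T_* \defeq T-a$ and then $T_* \defeq T-b$, producing an integrand $\ll \sLT^2/(t-T+1)^2$ and a contribution of $O(\sLT^2)$.

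The main obstacle is the intermediate interval $\min\{a,b\} < t < \max\{a,b\}$ in (B): only one of the two inner sums admits the strong localized decay of \eqref{eq:shrink4}, so the naive bound $\sLT^2 \cdot |a-b|$ would be vacuous when $|a-b| \asymp T$. Applying \eqref{eq:shrink2} to the non-decaying sum together with the logarithmic integral of $(b-t+1)^{-1}$ over this interval is what keeps the piece within the target $O(\sLT^3)$.
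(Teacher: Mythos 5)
Your proposal follows the paper's own proof essentially verbatim: first discard pairs with $\gamma\notin(0,T-a]$ or $\gamma'\notin(0,T-b]$ by pairing \eqref{eq:shrink1} against \eqref{eq:shrink2} and integrating, then extend the remaining integral to $\R$ by splitting the tails at $\min\{a,b\}$, $\max\{a,b\}$, and $T$, applying \eqref{eq:shrink3} on the right, the decaying bound on both factors on the far left, and a mixed decaying/flat bound on the intermediate interval. The only cosmetic differences are that you take $a\le b$ where the paper takes $a\ge b$, and you cite \eqref{eq:shrink4} where the paper cites the equivalent \eqref{eq:shrink5}.
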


\begin{proof}
If $\max\{a,b\}\le t\le T$, then $t-a\in[0,T-a]$, so
\eqref{eq:shrink1} (with $t_*=t-a$, $T_*=T-a$) bounds the sum over
$\gamma\notin[0,T-a]$, while \eqref{eq:shrink2} handles the sum over
$\gamma'$; integrating in $t$ gives
\[
\ssum{\gamma,\gamma'\\\gamma\not\in[0,T-a]}
\int_{\max\{a,b\}}^T\frac{1}{(t-a-\gamma)^2+1}
\cdot\frac{1}{(t-b-\gamma')^2+1}\dd t\ll\sLT^3,
\]
and the same bound holds with the roles of $(\gamma,a)$ and
$(\gamma',b)$ interchanged. Thus, up to $O(\sLT^3)$, the sum in
\eqref{eq:jack} may be restricted to $\gamma\in(0,T-a]$ and
$\gamma'\in(0,T-b]$.

It remains to extend the $t$-integral from $[\max\{a,b\},T]$ to
all of $\R$. To do this, we bound the three complementary
ranges $(T,\infty)$, $(-\infty,\min\{a,b\}]$,
and $[\min\{a,b\},\max\{a,b\}]$, each by $O(\sLT^3)$.

For any $t>T$, we have $t-a>T-a$ and $t-b>T-b$;
applying \eqref{eq:shrink3} twice, 
\[
\ssum{\gamma\in(0,T-a]\\\gamma'\in(0,T-b]}
\int_T^\infty \frac{1}{(t-a-\gamma)^2+1}
\cdot\frac{1}{(t-b-\gamma')^2+1}\dd t
\ll\sLT^2\int_T^\infty\frac{dt}{(t-T+1)^2}\ll\sLT^2.
\]
Similarly, using \eqref{eq:shrink4}, we have
\dalign{
&\ssum{\gamma\in(0,T-a]\\\gamma'\in(0,T-b]}
\int_{-\infty}^{\min\{a,b\}}
\frac{1}{(t-a-\gamma)^2+1}
\cdot\frac{1}{(t-b-\gamma')^2+1}\dd t\\
&\qquad\qquad\ll\int_{-\infty}^{\min\{a,b\}}
\frac{\log(a-t+10)}{a-t+1}
\cdot\frac{\log(b-t+10)}{b-t+1}\dd t\ll\sLT^2.
}
Finally, we bound
\[
\ssum{\gamma\in(0,T-a]\\\gamma'\in(0,T-b]}
\int_{\min\{a,b\}}^{\max\{a,b\}}
\frac{1}{(t-a-\gamma)^2+1}
\cdot\frac{1}{(t-b-\gamma')^2+1}\dd t.
\]
Assuming (without loss of generality) that $a\ge b$, this is
\[
\ssum{\gamma\in(0,T-a]\\\gamma'\in(0,T-b]}
\int_b^a\frac{1}{(a-t+\gamma)^2+1}
\cdot\frac{1}{(t-b-\gamma')^2+1}\dd t.
\]
Using \eqref{eq:shrink5} for the sum over $\gamma$ and
\eqref{eq:shrink2} for the sum over $\gamma'$, this is
\[
\ll\sLT\int_b^a\frac{\log(a-t+10)}{a-t+1}\dd t\ll\sLT^3,
\]
and the proof is finished.
\end{proof}

The next lemma is needed for our estimate of the $A$-integral
in \S\ref{sec:A2}.

\begin{lemma}\label{lem:specialized}
Uniformly for $t\in\R$, we have
\[
\sum_{\gamma\in(0,T]}\log\tau_{t-\gamma}
=\frac{T\sLT\sLTtau}{2\pi}
+O(T\sLTtau\log\sLT)\qquad(T\ge 14),
\]
where $\tau_t\defeq\tau(t)=|t|+10$
and $\sLTtau\defeq\log\max\{T,\tau_t\}$.
\end{lemma}

\begin{proof}
Suppose first that $|t|>100T$. For any $\gamma\in(0,T]$ we have
$|t-\gamma|\ge|t|-T$, hence
\[
\tau_{t-\gamma}\asymp\tau_t
\mand
\log\tau_{t-\gamma}=\log\tau_t+O(1)=\sLTtau+O(1)
\]
since $\max\{T,\tau_t\}=\tau_t$ holds in this case. Consequently,
\[
\sum_{\gamma\in(0,T]}\log\tau_{t-\gamma}
=(\sLTtau+O(1))N(T),
\]
and the result follows from \eqref{eq:N(u)estimate}.

From now on, we assume $|t|\le 100T$.
Using Riemann--Stieltjes integration, we have
\[
\sum_{\gamma\in(0,T]}\log\tau_{t-\gamma}
=\int_{14}^T\log(|t-u|+10)\dd N(u).
\]
Put
\[
\cJ\defeq\{u\in[14,T]:|t-u|<T/\sLT\},
\]
which is an interval of length $O(T/\sLT)$.
For all $u\in[14,T]\setminus\cJ$, we have
\[
T/\sLT\le |t-u|+10\ll T,
\]
the upper bound using $|t|\le 100T$. Therefore,
\[
\log(|t-u|+10)=\sLT+O(\log\sLT)
\qquad(u\in[14,T]\setminus\cJ).
\]
It follows that
\dalign{
\int_{14}^T\log(|t-u|+10)\dd N(u)
&=\bigl\{\sLT+O(\log\sLT)\bigr\}N(T)
+O\(\sLT\int_\cJ\dd N(u)\)\\
&=\frac{T\sLT^{2}}{2\pi}
+O(T\sLT\log\sLT),
}
where we have used \eqref{eq:N(u)estimate} in the second step,
together with the bound $\int_\cJ dN(u)\ll T$, which follows from
\eqref{eq:N(u)estimate} and the length of $\cJ$.
Since $\sLTtau=\sLT+O(1)$ in this case, the lemma follows.
\end{proof}

The following result, which underlies our estimate of the
$B$-integral in \S\ref{sec:B2}, is a well known theorem
of Gonek \cite[Thm.\,1]{Gonek}. Under RH, for each integer $n\ge 2$
and all $t\ge 14$, one has
\be\label{eq:STLrocks}
S(n,t)\defeq\sum_{\gamma\in(0,t]}n^{i\gamma}=\lambda_nt+E(n,t),
\ee
where
\be\label{eq:lam-cE-defn}
\lambda_n\defeq-\frac{\Lambda(n)}{2\pi n^{1/2}},\qquad
E(n,t)\ll\cE(n,t)\defeq\begin{cases}
n^{1/2}\log t\,\log\log t
&\quad\hbox{if $n\le t$},\\
n^{1/2}\log n\,\log\log n
&\quad\hbox{if $n>t$}.\\
\end{cases}
\ee
Gonek's theorem holds unconditionally and is even more precise. We
require only the pointwise bound \eqref{eq:lam-cE-defn}, which in
\S\ref{sec:B2} controls the error term $R$ of the $B$-integral.

Finally, we record a simple estimate needed for the
$C$- and $D$-integrals in \S\ref{sec:C2+D2}.

\begin{lemma}\label{lem:harmonic}
For all $T\ge 20$, we have
\[
\sum_{\gamma\in(0,T]}\frac{1}{\gamma}\asymp\sLT^2.
\]
\end{lemma}

\begin{proof}
Since the least positive ordinate is $\gamma_1=14.13\cdots$,
partial summation gives
\[
\sum_{\gamma\in(0,T]}\frac{1}{\gamma}
=\frac{N(T)}{T}+\int_{14}^T\frac{N(u)}{u^2}\dd u,
\]
and the result follows from \eqref{eq:N(u)estimate}.
\end{proof}

{\Large\section{Proof of Theorem \ref{thm:main}}\label{sec:proof}}

\subsection{Basic identity}
For the convenience of the reader, we recall the definition:

\begin{definition*}
Assume \text{\rm RH}. Let $\fZ_2(T)$ be
the multiset  of all ordered pairs
$\bg=(\gamma_1,\gamma_2)$ such that $\gamma_j>0$
and $\zeta(\tfrac12+i\gamma_j)=0$ for each $j$, and $\obg\le T$.
\end{definition*}


Let $W$, $A$, $B$, $C$, and $D$ be defined as in \S\ref{sec:underlying}.
Replacing $t$ by $t-\widetilde\gamma$ in \eqref{eq:mont-orig}
and taking into account the relation
\be\label{eq:carr}
W_{\gamma}(t-\widetilde\gamma)=W_{\eobg}(t)
=\frac{2}{(t-\obg)^2+1}
\qquad\text{for all}\quad\bg=(\widetilde\gamma,\gamma),
\ee
we get that
\be\label{eq:precog}
\mathop{\sum_{\gamma}}
x^{i\gamma}W_{\eobg}(t)
=A(t-\widetilde\gamma)+B(t-\widetilde\gamma)
+C(t-\widetilde\gamma)+D(t-\widetilde\gamma).
\ee
Let $\ind{\widetilde\gamma}$ denote the indicator function of
the interval $[\widetilde\gamma,\infty)$.
Multiplying \eqref{eq:precog} by
$x^{i\widetilde\gamma}\ind{\widetilde\gamma}(t)$,
summing over all $\widetilde\gamma\in(0,T]$ (counted with multiplicity),
and integrating over $[0,T]$,
we derive the basic identity
\be\label{eq:basic-identity2}
\begin{split}
&\int_0^T\Bigg|
\mathop{\sum_{\widetilde\gamma\in(0,T]}\sum_{\gamma}}
x^{i\eobg}\ind{\widetilde\gamma}(t)W_{\eobg}(t)\Bigg|^2\dd t\\
&\qquad=\int_0^T\Biggl|\sum_{\widetilde\gamma\in(0,T]}
x^{i\widetilde\gamma}\ind{\widetilde\gamma}(t)
\bigr\{A(t-\widetilde\gamma)+B(t-\widetilde\gamma)
+C(t-\widetilde\gamma)+D(t-\widetilde\gamma)\bigr\}\Biggr|^2\dd t,
\end{split}
\ee
where $\bg=(\widetilde\gamma,\gamma)$.
The indicator $\ind{\widetilde\gamma}$ confines each
$\widetilde\gamma$-summand to the range $t\ge\widetilde\gamma$,
keeping the arguments of $A$, $B$, $C$, $D$
nonnegative and matching the constraint $\obg\le T$ that
defines $\fZ_2(T)$. Following Montgomery \cite{Mont},
our approach is to first estimate the integral on
the left side of \eqref{eq:basic-identity2}
in terms of the quantity
\[
G_2(x,T)\defeq
3\Bigl(\frac{2\pi}{T\sLT}\Bigr)^{3}
\ssum{\bg,\bgp\in\fZ_2(T)}
x^{i(\eobg-\eobgp)}
w(\obg-\obgp),
\]
noting that $G_2(\alpha)=G_2(T^\alpha,T)$,
and to then estimate the right side of \eqref{eq:basic-identity2}
using individual estimates of the four integrals
corresponding to $A$, $B$, $C$, and $D$.

\subsection{Setup}\label{sec:notation2}
For any function $\Phi\colon[0,T]\to\C$, we write
\[
M[\Phi]\defeq\int_0^T\bigl|\Phi(t)\bigr|^2\dd t.
\]
The left side of \eqref{eq:basic-identity2} is $M_W\defeq M[S_W]$, where
\[
S_W(t)\defeq\mathop{\sum_{\widetilde\gamma\in(0,T]}\sum_\gamma}
x^{i\eobg}\ind{\widetilde\gamma}(t)W_{\eobg}(t).
\]
For $F=A$, $B$, $C$, or $D$ (see \S\ref{sec:underlying}),
we denote $M_F\defeq M[S_F]$, where
\[
S_F(t)\defeq\sum_{\widetilde\gamma\in(0,T]}
x^{i\widetilde\gamma}\ind{\widetilde\gamma}(t)\,F(t-\widetilde\gamma).
\]
Then \eqref{eq:basic-identity2} is precisely the identity
\[
M_W=M[S_A+S_B+S_C+S_D].
\]
We estimate $M_W$, $M_A$, $M_B$, $M_C$, and $M_D$
individually, and combine the results in \S\ref{sec:endgame2}.

\subsection{W-integral}\label{sec:W2}
By the definitions of \S\ref{sec:notation2},
\be\label{eq:MWf-defn-2}
M_W=\sum_{\widetilde\gamma,\widetilde\gamma'\in(0,T]}\sum_{\gamma,\gamma'}
x^{i(\eobg-\eobgp)}
\int_{\max\{\widetilde\gamma,\widetilde\gamma'\}}^T
W_{\eobg}(t)W_{\eobgp}(t)\dd t,
\ee
where $\bg=(\widetilde\gamma,\gamma)$ and
$\bgp=(\widetilde\gamma',\gamma')$.
In view of \eqref{eq:carr}, the last integral can be written as
\[
\int_{\max\{\widetilde\gamma,\widetilde\gamma'\}}^T
\frac{2}{(t-\widetilde\gamma-\gamma)^2+1}\cdot
\frac{2}{(t-\widetilde\gamma'-\gamma')^2+1}\dd t.
\]
Applying Lemma~\ref{lem:lime} with $a\defeq\widetilde\gamma$ and
$b\defeq\widetilde\gamma'$,
it follows that
\dalign{
M_W&=\sum_{\widetilde\gamma,\widetilde\gamma'\in(0,T]}
\ssum{\gamma\in(0,T-\widetilde\gamma]\\\gamma'\in(0,T-\widetilde\gamma']}
x^{i(\eobg-\eobgp)}
\int_\R W_{\eobg}(t)W_{\eobgp}(t)\dd t+O\bigl(N(T)^2\sLT^3\bigr)\\
&=\sum_{\bg,\bgp\in\fZ_2(T)}
x^{i(\eobg-\eobgp)}
\int_\R W_{\eobg}(t)W_{\eobgp}(t)\dd t+O(T^{2}\sLT^{5}).
}
The last integral is evaluated by the following lemma.

\begin{lemma}\label{lem:WW}
For any $a,b\in\R$, we have
\[
\int_\R W_a(t)W_b(t)\dd t=2\pi\cdot w(a-b),
\]
where $W_c(t)\defeq 2/((t-c)^2+1)$ and $w$ is given by
\eqref{eq:w(u)defn}.
\end{lemma}

\begin{proof}
By a residue computation (or by the definition \eqref{eq:w(u)defn} of
$w$), for any $c\in\R$ the function $W_c(t)=2/((t-c)^2+1)$ has the
Fourier transform
\[
\widehat{W_c}(\xi)\defeq\int_\R W_c(t)\er^{-i\xi t}\dd t
=2\pi\,\er^{-ic\xi-|\xi|}\qquad(\xi\in\R).
\]
Since $W_a,W_b\in L^2(\R)$, Plancherel's theorem and the identity
$w(v)=\int_\R\er^{-2|\xi|}\er^{iv\xi}\dd\xi$ give
\[
\int_\R W_a(t)W_b(t)\dd t
=\frac{1}{2\pi}\int_\R\widehat{W_a}(\xi)\,\overline{\widehat{W_b}(\xi)}\dd\xi
=2\pi\int_\R\er^{-2|\xi|}\er^{-i(a-b)\xi}\dd\xi
=2\pi\cdot w(a-b),
\]
as claimed.
\end{proof}

Taking $a\defeq\obg$ and $b\defeq\obgp$
in Lemma~\ref{lem:WW}, we conclude that
\be\label{eq:ident-W-two}
\begin{split}
M_W&=2\pi\sum_{\bg,\bgp\in\fZ_2(T)}
x^{i(\eobg-\eobgp)}
w(\obg-\obgp)+O(T^{2}\sLT^{5})\\
&=\frac{2\pi}{3}\Bigl(\frac{T\sLT}{2\pi}\Bigr)^{3}G_2(x,T)
+O(T^{2}\sLT^{5}).
\end{split}
\ee

\subsection{A-integral}\label{sec:A2}
By the definitions of \S\ref{sec:notation2},
\[
M_A=\sum_{\widetilde\gamma,\widetilde\gamma'\in(0,T]}
x^{i(\widetilde\gamma-\widetilde\gamma')}
\int_{\max\{\widetilde\gamma,\widetilde\gamma'\}}^T
A(t-\widetilde\gamma)\overline{A(t-\widetilde\gamma')}\dd t,
\]
where (cf.~\S\ref{sec:underlying})
\[
A(t)\defeq-\frac{\zeta'}{\zeta}(-\tfrac12+it)x^{-1+it}=f(t)x^{-1+it}
\]
with
\[
f(t)\defeq\frac{\zeta'}{\zeta}(\tfrac32-it)
-\log\pi+\frac12\frac{\Gamma'}{\Gamma}\Bigl(\frac{-\tfrac12+it}{2}\Bigr)
+\frac12\frac{\Gamma'}{\Gamma}\Bigl(\frac{\tfrac32-it}{2}\Bigr),
\]
in view of the functional equation of the
logarithmic derivative of the zeta function:
\[
\frac{\zeta'}{\zeta}(s)
+\frac{\zeta'}{\zeta}(1-s)
=\log\pi-\frac12\frac{\Gamma'}{\Gamma}\Bigl(\frac{s}{2}\Bigr)
-\frac12\frac{\Gamma'}{\Gamma}\Bigl(\frac{1-s}{2}\Bigr).
\]
Using standard estimates for the digamma function
(see, e.g., \cite[App.~C]{MontVauBook}) together
with the bound $\frac{\zeta'}{\zeta}(\tfrac32-it)\ll 1$, 
we have
\be\label{eq:dietcoke}
f(t)=\log\tau_t+O(1),\qquad\tau_t\defeq|t|+10.
\ee
Since
\[
x^{i(\widetilde\gamma-\widetilde\gamma')}
A(t-\widetilde\gamma)\overline{A(t-\widetilde\gamma')}
=x^{-2}f(t-\widetilde\gamma)\overline{f(t-\widetilde\gamma')},
\]
it follows that
\[
M_A=x^{-2}\int_0^T\biggl|\sum_{\widetilde\gamma\in(0,t]}
f(t-\widetilde\gamma)\biggr|^2\dd t.
\]
By \eqref{eq:dietcoke}, \eqref{eq:N(u)estimate}, and
Lemma~\ref{lem:specialized} with $T$ replaced by $t$
(so that $\sLTtau=\log(t+10)$ at the point of evaluation),
we find that
\[
\sum_{\widetilde\gamma\in(0,t]}f(t-\widetilde\gamma)
=\frac{t(\log t)^2}{2\pi}+O(t\log t\,\log\sLT)
\qquad(14\le t\le T).
\]
Moreover, for $t\in[0,14)$ the sum is empty since $\gamma_1>14.13$.
Inserting these results into the previous expression for $M_A$,
a short calculation leads to 
\[
M_A=\frac{T^{3}\sLT^{4}}
{12\pi^2\,x^2}
\biggl\{1+O\(\frac{\log\sLT}{\sLT}\)\biggr\}.
\]

\subsection{B-integral}\label{sec:B2}
By the definitions of \S\ref{sec:notation2},
\[
M_B=\int_0^T\bigl|S_B(t)\bigr|^2\dd t,
\]
and (by the definition of $B$ in \S\ref{sec:underlying})
\dalign{
S_B(t)&=\sum_{\widetilde\gamma\in(0,T]}x^{i\widetilde\gamma}
\ind{\widetilde\gamma}(t)\,B(t-\widetilde\gamma)\\
&=-x^{-1/2}\sum_{\widetilde\gamma\in(0,T]}
\ind{\widetilde\gamma}(t)\biggl\{
\sum_{n\le x}\Lambda(n)n^{i\widetilde\gamma}\(\frac{x}{n}\)^{-1/2+it}
+\sum_{n>x}\Lambda(n)n^{i\widetilde\gamma}\(\frac{x}{n}\)^{3/2+it}\biggr\}\\
&=-x^{-1/2}\biggl\{
\sum_{n\le x}\Lambda(n)\,S(n,t)\(\frac{x}{n}\)^{-1/2+it}
+\sum_{n>x}\Lambda(n)\,S(n,t)\(\frac{x}{n}\)^{3/2+it}\biggr\},
}
where (as in \eqref{eq:STLrocks})
\[
S(n,t)\defeq\sum_{\gamma\in(0,t]}n^{i\gamma}=\lambda_nt+E(n,t).
\]

To evaluate $M_B$, we first separate the main term of $S_B$
from its error, and then appeal to a mean value theorem. Thus,
we now write
\be\label{eq:SB-split}
S_B(t)=-x^{-1/2}\bigl\{t\,P(t)+Q(t)\bigr\},
\ee
where
\dalign{
P(t)&\defeq\sum_{n\le x}\Lambda(n)\lambda_n\(\frac{x}{n}\)^{-1/2+it}
+\sum_{n>x}\Lambda(n)\lambda_n\(\frac{x}{n}\)^{3/2+it},\\
Q(t)&\defeq\sum_{n\le x}\Lambda(n)E(n,t)\(\frac{x}{n}\)^{-1/2+it}
+\sum_{n>x}\Lambda(n)E(n,t)\(\frac{x}{n}\)^{3/2+it}.
}
Since $|x^{it}|=1$, we have $|P(t)|=\bigl|\sum_nb_nn^{-it}\bigr|$ with
\[
b_n\defeq-\frac{\Lambda(n)^2}{2\pi x^{1/2}}\quad(n\le x),\qquad
b_n\defeq-\frac{\Lambda(n)^2x^{3/2}}{2\pi n^{2}}\quad(n>x).
\]
Noting that $\sum_nb_n^2\,n<\infty$,
we may apply \cite[Cor.\,3]{MontVau} to deduce that
\[
I(t)\defeq\int_0^t|P(u)|^2\dd u=\sum_nb_n^2(t+O(n)).
\]
Integrating by parts, we have
\[
\int_0^Tt^2|P(t)|^2\dd t=T^2I(T)-2\int_0^TtI(t)\dd t
=\sum_nb_n^2\bigl\{\tfrac13T^3+O(nT^2)\bigr\}.
\]
By Lemma~\ref{lem:wagyu-sirloin},
\[
\sum_{n\le x}b_n^2=\frac{\sLx^3}{4\pi^2}\{1+O(\sLx^{-1})\},\qquad
\sum_{n>x}b_n^2=\frac{\sLx^3}{12\pi^2}\{1+O(\sLx^{-1})\},
\]
and also $\sum_nb_n^2\,n\ll x\sLx^3$; consequently,
\be\label{eq:P-main}
x^{-1}\int_0^Tt^2|P(t)|^2\dd t
=\frac{\sLx^3T^3}{9\pi^2x}\{1+O(\sLx^{-1})\}+O(\sLx^3T^2).
\ee

It remains to estimate $Q$.
Uniformly for $0\le t\le T$ we have
\be\label{eq:E-pointwise}
E(n,t)\ll\begin{cases}
n^{1/2}\,\sLT\log\sLT
&\quad\hbox{if $2\le n\le T^2$},\\
T\sLT
&\quad\hbox{if $n>T^2$}.\\
\end{cases}
\ee
Indeed, for $14\le t\le T$ and $n\le T^2$ this follows from
\eqref{eq:lam-cE-defn}, since $\log(nt)\ll\sLT$ and
$\log\log(nt)\ll\log\sLT$ in that range, whereas for $0\le t<14$
one has $S(n,t)=0$ since $\gamma_1>14.13$,
and $E(n,t)=-\lambda_nt\ll 1$. For $n>T^2$ we use instead
$|S(n,t)|\le N(T)\ll T\sLT$ and $\lambda_nt\ll T$.
By \eqref{eq:E-pointwise} and Lemma~\ref{lem:wagyu-sirloin},
uniformly for $0\le t\le T$, we have
\dalign{
Q(t)&\ll\frac{\sLT\log\sLT}{x^{1/2}}\ssum{n\le x}\Lambda(n)\,n
+x^{3/2}\,\sLT\log\sLT\ssum{x<n\le T^2}\frac{\Lambda(n)}{n}
+x^{3/2}\,T\sLT\ssum{n>T^2}\frac{\Lambda(n)}{n^{3/2}}\\
&\ll x^{3/2}\,\sLT^2\log\sLT,
}
and therefore
\be\label{eq:Q-bound}
\int_0^T|Q(t)|^2\dd t\ll x^3\,T\,\sLT^4\log^2\sLT.
\ee
Expanding \eqref{eq:SB-split} and applying the Cauchy--Schwarz
inequality to the cross term, \eqref{eq:P-main} and
\eqref{eq:Q-bound} yield the estimate
\[
M_B=x^{-1}\int_0^Tt^2|P(t)|^2\dd t
+O\bigl(x^{1/2}T^2\,\sLx^{3/2}\sLT^2\log\sLT
+x^2T\,\sLT^4\log^2\sLT\bigr),
\]
and both error terms are $\ll x^{-1}\sLx^2T^3$ for
$x\le T^{2/3}\sLT^{-2}$. Hence,
\be\label{eq:MB-final}
M_B=\frac{\sLx^3 T^3}{9\pi^2x}+O\bigl(x^{-1}\sLx^2T^3\bigr)
\qquad\bigl(16\le x\le T^{2/3}\sLT^{-2}\bigr).
\ee

\subsection{C- and D-integrals}\label{sec:C2+D2}

Here (cf.~\S\ref{sec:underlying})
\dalign{
C(t)\defeq\frac{x^{1/2}}{\tfrac32-it}+\frac{x^{1/2}}{\tfrac12+it},\qquad
D(t)\defeq\sum_{n\in\N}\frac{x^{-2n-1/2}}{2n-\tfrac12+it}
-\sum_{n\in\N}\frac{x^{-2n-1/2}}{2n+\tfrac32+it}.
}
For all real $t$ we have $|C(t)|\ll x^{1/2}\tau_t^{-1}$, and also
$|D(t)|\ll x^{-1/2}\tau_t^{-1}$, since the $n$th summand in $D(t)$ is
$\ll x^{-2n-1/2}(n^2+t^2)^{-1}$ and
$\sum_{n\in\N}(n^2+t^2)^{-1}\ll\tau_t^{-1}$.
Moreover, grouping the ordinates into unit intervals and using the
bound $N(u+1)-N(u)\ll\sLT$ $(0\le u\le T)$, immediate from
\eqref{eq:N(u)estimate}, we have
\be\label{eq:tau-sum}
\sum_{\widetilde\gamma\in(0,T]}\frac{1}{\tau_{t-\widetilde\gamma}}
\ll\sLT\sum_{0\le k\le T}\frac{1}{|t-k|+10}
\ll\sLT^2\qquad(0\le t\le T).
\ee
By the triangle inequality, followed by $\ind{\widetilde\gamma}(t)\le 1$,
for $0\le t\le T$ we have
\[
S_C(t)\defeq\sum_{\widetilde\gamma\in(0,T]}
x^{i\widetilde\gamma}\ind{\widetilde\gamma}(t)\,C(t-\widetilde\gamma).
\]
\[
|S_C(t)|=\bigg|\sum_{\widetilde\gamma\in(0,T]}
x^{i\widetilde\gamma}\ind{\widetilde\gamma}(t)
\,C(t-\widetilde\gamma)\bigg|
\le\sum_{\widetilde\gamma\in(0,T]}\bigl|C(t-\widetilde\gamma)\bigr|
\ll x^{1/2}\sLT^2,
\]
and likewise $|S_D(t)|\ll x^{-1/2}\sLT^2$; therefore
\[
M_C\ll xT\sLT^{4}
\mand
M_D\ll x^{-1}T\sLT^{4}.
\]

\subsection{Endgame}\label{sec:endgame2}

\begin{lemma}\label{lem:cross}
With $S_A$, $S_B$ as in \S\ref{sec:notation2}, uniformly for
$16\le x\le T^{2/3}\sLT^{-2}$, we have
\[
\int_0^T S_A(t)\overline{S_B(t)}\dd t
\ll\frac{M_A+M_B}{\log\sLT}.
\]
\end{lemma}

\begin{proof}
Suppose $16\le x\le T^{2/3}\sLT^{-2}$, and let
$P$, $Q$, $\lambda_n$, and $E(n,t)$ be defined as in \S\ref{sec:B2}. 
From \S\ref{sec:A2},
\[
x^{i\widetilde\gamma}A(t-\widetilde\gamma)
=x^{-1+it}f(t-\widetilde\gamma)
\]
with
\dalign{
f(t)&\defeq
-\log\pi+\frac12\frac{\Gamma'}{\Gamma}\Bigl(\frac{-\tfrac12+it}{2}\Bigr)
+\frac12\frac{\Gamma'}{\Gamma}\Bigl(\frac{\tfrac32-it}{2}\Bigr)
+\frac{\zeta'}{\zeta}(\tfrac32-it)
=\phi(t)-\sum_{m\in\N}\frac{\Lambda(m)}{m^{3/2-it}},
}
where $\phi(u)\ll\log\tau_u$ and $\phi'(u)\ll\tau_u^{-1}$
\cite[App.~C]{MontVauBook}. Then
\[
S_A(t)\defeq\sum_{\widetilde\gamma\in(0,T]}
x^{i\widetilde\gamma}\ind{\widetilde\gamma}(t)
x^{-1+it}\bigg\{\phi(t-\widetilde\gamma)
-\sum_{m\in\N}\frac{\Lambda(m)}{m^{3/2-i(t-\widetilde\gamma)}}\bigg\}
=g_1(t)+g_2(t),
\]
where
\[
g_1(t)\defeq\ssum{\widetilde\gamma\in(0,t]}\phi(t-\widetilde\gamma),
\qquad
g_2(t)\defeq-\ssum{m\ge2}\frac{\Lambda(m)}{m^{3/2}}\,
m^{it}\,\overline{S(m,t)}.
\]
and so by
\eqref{eq:SB-split},
\be\label{eq:X-reduce}
\int_0^TS_A(t)\overline{S_B(t)}\dd t
=-x^{-3/2}\int_0^Tx^{it}\bigl\{g_1(t)+g_2(t)\bigr\}
\bigl\{t\,\overline{P(t)}+\overline{Q(t)}\bigr\}\dd t.
\ee

We now record four facts, each uniform for $0\le t\le T$.

First, since
$\tau_{t-\widetilde\gamma}\le\tau_t$ for $\widetilde\gamma\in(0,t]$ and
$N(t)\ll t\sLT$ (see \eqref{eq:N(u)estimate}), we have
\be\label{eq:g0-sup}
g_1(t)\ll N(t)\log\tau_t\ll t\,\sLT^2,
\qquad\text{and thus}\quad
\int_0^T|g_1(t)|\dd t\ll T^2\sLT^2.
\ee

Second, $g_1$ is piecewise $C^1$, with an upward jump of size
$|\phi(0)|\ll1$ at each ordinate in $(0,T]$ and derivative
$g_1'(t)=\sum_{\widetilde\gamma\le t}\phi'(t-\widetilde\gamma)\ll\sLT^2$
between jumps, by \eqref{eq:tau-sum}. Writing $V(h)$ for the total
variation of $h$ on $[0,T]$, the jumps contribute $\ll N(T)\ll T\sLT$
and the smooth part $\int_0^T|g_1'|\ll T\sLT^2$, so that
\be\label{eq:g0-var}
V(g_1)\ll T\sLT^2,\qquad
V\bigl(t\,g_1(t)\bigr)
\le T\,V(g_1)+T\sup_{[0,T]}|g_1|\ll T^2\sLT^2,
\ee
the second bound by the product rule for total variation together
with \eqref{eq:g0-sup}.

Third, by \eqref{eq:STLrocks}, \eqref{eq:E-pointwise}, and
Lemma~\ref{lem:wagyu-sirloin},
\[
g_2(t)\ll t\ssum{m\ge2}\frac{\Lambda(m)^2}{m^2}
+\sLT\log\sLT\ssum{2\le m\le T^2}\frac{\Lambda(m)}{m}
+T\sLT\ssum{m>T^2}\frac{\Lambda(m)}{m^{3/2}}
\ll t+\sLT^2\log\sLT,
\]
whence $\int_0^T|g_2(t)|^2\dd t\ll T^3$. 

Fourth, if $h$ has bounded
variation on $[0,T]$ with $h(0)=0$, then for $n\ge2$, integration by
parts gives
\be\label{eq:vdc}
\int_0^Th(t)\,n^{it}\dd t
=\frac{1}{i\log n}\biggl\{\bigl[h(t)\,n^{it}\bigr]_0^T
-\int_0^Tn^{it}\dd h(t)\biggr\}
\ll\frac{V(h)}{\log n}.
\ee

We now split \eqref{eq:X-reduce} into four terms $X_{1P}$, $X_{1Q}$,
$X_{2P}$, $X_{2Q}$, according to $g=g_1+g_2$ and
$t\overline P+\overline Q$.
For the $g_1$-terms, we expand $P$ and $Q$ over $n$ and use
$x^{it}\,\overline{(x/n)^{it}}=n^{it}$, the interchange justified by
absolute convergence, writing $c_n\defeq(n/x)^{1/2}$ for $n\le x$ and
$c_n\defeq(x/n)^{3/2}$ for $n>x$. Applying \eqref{eq:vdc} with
$h(t)=t\,g_1(t)$, then \eqref{eq:g0-var}, the inequality
$\Lambda(n)^2/\log n\le\Lambda(n)$, and Lemma~\ref{lem:wagyu-sirloin},
\dalign{
X_{1P}&\ll x^{-3/2}\,T^2\sLT^2
\ssum{n\ge2}\frac{\Lambda(n)^2c_n}{n^{1/2}\log n}\\
&\ll x^{-3/2}\,T^2\sLT^2\biggl(x^{-1/2}\ssum{n\le x}\Lambda(n)
+x^{3/2}\ssum{n>x}\frac{\Lambda(n)}{n^2}\biggr)
\ll\frac{T^2\sLT^2}{x}.
}
As $x\log\sLT\ll T\sLT^2$ throughout the range, the bound
$X_{1P}\ll M_A/\log\sLT$ holds by \S\ref{sec:A2}. 
Similarly, by \eqref{eq:g0-sup}, \eqref{eq:E-pointwise},
and Lemma~\ref{lem:wagyu-sirloin}, $X_{1Q}$ is
\dalign{
&\ll x^{-3/2}\,T^2\sLT^2\biggl\{\sLT\log\sLT
\biggl(x^{-1/2}\ssum{n\le x}\Lambda(n)\,n
+x^{3/2}\ssum{x<n\le T^2}\frac{\Lambda(n)}{n}\biggr)
+T\sLT\,x^{3/2}\ssum{n>T^2}\frac{\Lambda(n)}{n^{3/2}}\biggr\}\\
&\ll T^2\sLT^4\log\sLT.
}
When $x\le T^{1/2}\log^{-2}\sLT$ this gives $X_{1Q}\ll M_A/\log\sLT$,
since $x^2\log^2\sLT\ll T$; otherwise $\sLx\ge\tfrac13\sLT$, so
\eqref{eq:MB-final} yields $M_B\asymp\sLx^3T^3/x$ in this case;
since $x\,\sLT\log^2\sLT\ll T$, we obtain that $X_{1Q}\ll M_B/\log\sLT$.

For the $g_2$-terms, we apply Cauchy--Schwarz, using
\[
\int_0^T|g_2|^2\ll T^3
\mand
\int_0^Tt^2|P(t)|^2\dd t\ll\sLx^3T^3
\quad\text{(from \eqref{eq:P-main}, since $x\le T$)},
\]
together with \eqref{eq:Q-bound}, we derive that
\[
X_{2P}\ll x^{-3/2}\,T^3\sLx^{3/2},
\qquad
X_{2Q}\ll x^{-3/2}\,T^{3/2}
\Bigl(\int_0^T|Q(t)|^2\dd t\Bigr)^{1/2}
\ll T^2\sLT^2\log\sLT.
\]
If $x^{1/2}\sLx^{3/2}\le\sLT^4/\log\sLT$, then $X_{2P}\ll M_A/\log\sLT$;
otherwise $x\ge\sLT^4$ for $T$ large, so $\sLx\gg\log\sLT$,
\eqref{eq:MB-final} gives $M_B\asymp\sLx^3T^3/x$, and
$\log\sLT\ll x^{1/2}\sLx^{3/2}$; together, these facts yield the
bound $X_{2P}\ll M_B/\log\sLT$. The term
$X_{2Q}$ is dominated by the bound for $X_{1Q}$ and treated identically.
Combining the four estimates completes the proof.
\end{proof}

\begin{proof}[Proof of Theorem~\ref{thm:main}]
Recall from \S\ref{sec:notation2}:
\[
M_W=M[S_A+S_B+S_C+S_D].
\]
By the results of \S\S\ref{sec:A2}$-$\ref{sec:C2+D2}, for
$16\le x\le T^{2/3}\sLT^{-2}$ we have
\dalign{
M_A&=\frac{T^3\sLT^4}{12\pi^2x^2}
\bigl\{1+O(\sLT^{-1}\log\sLT)\bigr\},\qquad
&M_B&=\frac{\sLx^3 T^3}{9\pi^2x}\bigl\{1+O(\sLx^{-1})\bigr\},\\
M_C&\ll xT\sLT^4,\qquad
&M_D&\ll x^{-1}T\sLT^4,
}
so that $M_A,M_B\ge M_C,M_D$ and, throughout this range,
\[
\sqrt{(M_A+M_B)(M_C+M_D)}\ll\frac{M_A+M_B}{\log\sLT}.
\]
Applying Lemma~\ref{lem:C-S} with $F_1=S_A$, $F_2=S_B$, $F_3=S_C$,
$F_4=S_D$, and invoking Lemma~\ref{lem:cross} for the cross term
$2\Re\int_0^TS_A(t)\overline{S_B(t)}\dd t$, we obtain
\[
M_W=M_A+M_B+O\(\frac{M_A+M_B}{\log\sLT}\)
=\biggl\{\frac{T^3\sLT^4}{12\pi^2x^2}
+\frac{\sLx^3 T^3}{9\pi^2x}\biggr\}
\bigl\{1+O((\log\sLT)^{-1})\bigr\}.
\]
For $1\le x<16$ the same formula holds, since trivial bounds give
$M_B\ll T^3\sLT^2$ and $\sLx^3T^3/x\ll T^3\ll M_A/\sLT$, so all
terms other than $M_A$ are absorbed in the error. On the other hand,
recalling \eqref{eq:basic-identity2} and the definition
\eqref{eq:MWf-defn-2}, we have by \eqref{eq:ident-W-two}
\[
G_2(x,T)=\biggl\{\frac{\sLT}{x^2}
+\frac{4\sLx^3}{3x\sLT^3}\biggr\}
\bigr\{1+O((\log\sLT)^{-1})\bigl\}
\qquad(x\le T^{2/3}\sLT^{-2}).
\]
Taking $x\defeq T^\alpha$, we complete the proof of
Theorem~\ref{thm:main}.
\end{proof}

\end{document}